\documentclass[hidelinks,11pt]{amsart}

\usepackage{amsmath,amssymb,amsfonts,hyperref,graphicx,tikz,mathrsfs,amsaddr,blindtext}

\usepackage[labelsep=period, font=footnotesize, labelfont=bf]{caption}

\makeatletter
\@namedef{subjclassname@2020}{2020 Mathematics Subject Classification}
\makeatother

\pagestyle{plain}
\usepackage[margin=1in,footskip=0.25in]{geometry}

\theoremstyle{plain}


\newtheorem{theorem}{Theorem}[section]
\newtheorem{lemma}[theorem]{Lemma}

\theoremstyle{definition}
\theoremstyle{proposition}
\newtheorem{definition}[theorem]{Definition}

\newtheorem{corollary}[theorem]{Corollary}

\newtheorem{remark}[theorem]{Remark}

\numberwithin{equation}{section}



\def\C{\mathbb{C}}


\def\A{\mathcal{A}}

\def\I{\mathcal{I}}
\def\L{\mathcal{L}}
\def\Q{\mathcal{Q}}





\def\v{\mathbf{v}}

\def\w{\mathbf{w}}

\def\x{\mathbf{x}}

\def\y{\mathbf{y}}

\def\s{\mathfrak{s}}

\def \la{\lambda}



\date{\today}

\begin{document}
\title[Spectra of weighted uniform hypertrees]
{Spectra of weighted uniform hypertrees}

\keywords{Weighted hypertree; Matching polynomial; Eigenvalue.}

\author{Jiang-Chao Wan, Yi Wang, Fu-Tao Hu}
\address{\rm{\normalsize{Center for Pure Mathematices, School of Mathematical Sciences, Anhui University, \\ Hefei 230601, Anhui, China}}}
\thanks{{\it E-mail address:} wanjc@stu.ahu.edu.cn (J.-C. Wan), wangy@ahu.edu.cn (Y. Wang), hufu@ahu.edu.cn(F.-T. Hu)}
\thanks{{\it Corresponding author.} Yi Wang}
\thanks{{\it Funding.} Supported by the National Natural Science Foundation of China (No. 12171002, 11871073) and Anhui Provincial Natural Science Foundation (No. 2108085MA02)}

\maketitle

\begin{abstract}
Let $T$ be a $k$-tree equipped with a weighting function $\w: V(T)\cup E(T)\rightarrow \C$, where $k \geq 3$.
The weighted matching polynomial of the weighted $k$-tree $(T,\w)$ is defined to be
$$
\mu(T,\w,x)= \sum_{M \in \mathcal{M}(T)}(-1)^{|M|}\prod_{e \in E(M)}\mathbf{w}(e)^k \prod_{v \in V(T)\backslash V(M)}(x-\w(v)),
$$
where $\mathcal{M}(T)$ denotes the set of matchings (including empty set) of $T$.
In this paper, we investigate the eigenvalues of the adjacency tensor $\A(T,\w)$ of the weighted $k$-tree $(T,\w)$.
The main result provides that $\w(v)$ is an eigenvalue of $\A(T,\w)$ for every $v\in V(T)$,
and if $\lambda\neq \w(v)$ for every $v\in V(T)$, then $\lambda$ is an eigenvalue of $\A(T,\w)$ if and only if
there exists a subtree $T'$ of $T$ such that $\lambda$ is a root of  $\mu(T',\w,x)$.
Moreover, the spectral radius of $\A(T,\w)$ is equal to the largest root of $\mu(T,\w,x)$ when $\w$ is real and nonnegative.
The result extends a work by  Clark and Cooper ({\em On the adjacency spectra of hypertrees, Electron. J. Combin., 25 (2)(2018) $\#$P2.48}) to weighted $k$-trees.
As applications, two analogues of the above work for  the Laplacian  and the signless Laplacian tensors of $k$-trees are obtained.
\end{abstract}

\section{Introduction}

Since  Qi \cite{Qi} and Lim \cite{Lim} independently defined the concept of the tensor eigenvalue in 2005,  the problem of  tensor eigenvalues has received a lot of  attention, and has become an important subject of multilinear algebra and spectral hypergraph theory, see  \cite{Chen,Cooper,CooperLma,Ding,Ng,QiCommu}.
However, unlike the matrix eigenvalues,
for a symmetric tensor $\A$ of order $n$ and dimension $k\geq 3$,
it is difficult to calculate the eigenvalues of $\A$,
since the degree of the characteristic polynomial of $\A$ is $n(k-1)^{n-1}$ \cite{Qi}. The
computation of eigenvalues of higher order tensors is  NP-hard as mentioned by Hillar and Lim \cite{Hillar}.
In the present paper, we mainly study the eigenvalues of symmetric tensors whose underlying hypergraphs are $k$-trees, that is, the eigenvalues of adjacency tensors of weighted $k$-trees.

In 1972, Mowshowitz \cite{Mowshowitz} found that the coefficients of the characteristic polynomial of a tree can be expressed by the number of its matchings.
More precisely, the characteristic polynomial of a tree coincides with its matching polynomial. The reader may also see Corollary 4.2 of \cite{Godsil3} or Theorem 8.5.3 of \cite{Lovasz}.
This classical result is a fundamental identity in algebraic combinatorics, and plays an important role in spectral graph theory. For instance,
using this fact, Mowshowitz \cite{Mowshowitz} obtained that there exist infinitely many pairs of non-isomorphic cospectral trees
and Schwenk \cite{Schwenk} proved that almost all trees are not determined by their spectrum.

Inspired by Mowshowitz's classical work, recently, Zhang, Kang, Shan and Bai \cite{Zhang} proved the following:
For a $k$-tree $T$ with adjacency tensor $\A(T)$,
a nonzero number $\lambda$ is an eigenvalue of $\A(T)$ with the corresponding eigenvector having all elements nonzero
if and only if $\lambda$ is a root of the polynomial
 $$\sum_{i\geq 0}(-1)^ip(T,i) x^{(m(T)-i)k}=:\varphi(T,x),$$
where $m(T)$ is the matching number of $T$ and $p(T,i)$ denotes the number of $i$-matchings in $T$ with the convention that $p(T,0)=1$.
In particular, the spectral radius of $\A(T)$ is equal to the largest root of $\varphi(T, x)$.

Subsequently, Clark and Cooper \cite{ClarkCooper} discussed how to obtain all of the eigenvalues of a $k$-tree and gave the following Theorem.


\begin{theorem}[\cite{ClarkCooper}] \label{ClarkCoopertheorem11}
Let $T$ be a $k$-tree with $k\geq3$.
Then a nonzero number $\lambda$ is an eigenvalue of $\A(T)$ if and only if there exists an induced subtree $T'$ of $T$ such that $\lambda$ is a root $\varphi(T',x)$.
\end{theorem}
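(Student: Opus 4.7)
My plan is to reduce Theorem~\ref{ClarkCoopertheorem11} to the characterization of Zhang--Kang--Shan--Bai by localizing an eigenvector to its support. Recall that the eigenvalue equation for $\A(T)$ at a vertex $v \in V(T)$ reads
$$\lambda\, x_v^{k-1} \;=\; \sum_{e \in E(T):\, e \ni v}\; \prod_{u \in e \setminus \{v\}} x_u.$$
The argument will repeatedly use two facts: $T$, being a $k$-uniform hypertree with $k \geq 3$, contains no Berge cycles, and any two distinct edges of $T$ share at most one vertex.

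For sufficiency, suppose $\lambda \neq 0$ is a root of $\varphi(T',x)$ for some induced subtree $T'$ of $T$. The Zhang--Kang--Shan--Bai theorem yields a nowhere-zero eigenvector $\x'$ of $\A(T')$ for $\lambda$; I extend $\x'$ to $\x \in \C^{V(T)}$ by setting $x_v = 0$ for $v \notin V(T')$. At $v \in V(T')$, any edge $e$ with $e \not\subseteq V(T')$ contributes zero to the right-hand side, and since $T'$ is induced the surviving edges are exactly $E(T')$; hence the $T$-equation at $v$ collapses to the $T'$-equation, which holds by construction. The delicate check is at $v \notin V(T')$: I need every product $\prod_{u \in e \setminus \{v\}} x_u$ with $e \ni v$ to vanish. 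If not, then $e \setminus \{v\}$ would consist of $k-1 \geq 2$ vertices of $V(T')$; picking two of them, $u_1$ and $u_2$, and concatenating the $T'$-path from $u_1$ to $u_2$ with the edge $e$ itself (which is not an edge of $T'$, since $v \notin V(T')$) would produce a Berge cycle in $T$, contradicting its hypertree structure.

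For necessity, fix an eigenvector $\x$ for the nonzero eigenvalue $\lambda$ and set $S = \{v \in V(T) : x_v \neq 0\}$. For $v \in S$, a summand on the right-hand side of the eigenvalue equation vanishes whenever its edge $e$ meets $V(T) \setminus S$, so only edges $e \subseteq S$ contribute; thus $\x|_S$ is a nowhere-zero eigenvector of $\A(T[S])$ for $\lambda$, where $T[S]$ is the induced sub-hypergraph on $S$. Since $T[S]$ is an induced sub-hypergraph of a hypertree, it is a forest, and restricting to any connected component $T'$ gives a nowhere-zero eigenvector of $\A(T')$ for $\lambda$; a second appeal to Zhang--Kang--Shan--Bai then shows that $\lambda$ is a root of $\varphi(T',x)$ for this induced subtree $T'$. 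The principal obstacle, and the only step that genuinely requires both $k \geq 3$ and the acyclicity of $T$, is the sufficiency check at vertices outside $V(T')$; once that is dealt with, the rest is a routine bookkeeping argument about supports of eigenvectors.
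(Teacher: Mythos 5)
Your argument is correct, and its first half coincides with the paper's: the support-localization for necessity and the extension-by-zero for sufficiency are exactly the content of Theorem \ref{maintheorem321}. The only cosmetic difference there is that the paper performs the zero extension one pendant edge at a time (Lemmas \ref{corevertex} and \ref{corevertexCC}), killing each new summand because the deleted edge contains at least two zeroed coordinates, whereas you do it in a single step and justify the vanishing at $v\notin V(T')$ by the Berge-cycle argument; both exploit the same $k\geq 3$ phenomenon. Where you genuinely diverge is in the second half: you treat the Zhang--Kang--Shan--Bai characterization of nowhere-zero eigenpairs as a black box, while the paper re-derives that characterization, in weighted generality, via the $\alpha$-normal method (Lemma \ref{mainlemma1} combined with Lemma \ref{lemma4111}). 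For the unweighted statement at hand your route is the economical one --- it is essentially Clark and Cooper's original argument --- but it does not by itself yield the weighted Theorem \ref{maintheorem}, which is why the paper rebuilds the nowhere-zero case from scratch. Two points you leave implicit are harmless but worth recording: since $\lambda\neq 0$, every vertex of the support $S$ lies in an edge contained in $S$, so each component of $T[S]$ has at least one edge and the Zhang--Kang--Shan--Bai theorem genuinely applies to it; and every subtree of a hypertree is automatically induced, by the same cycle argument you already use.
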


In this paper, we  extend Theorem \ref{ClarkCoopertheorem11} to weighted $k$-trees.
To begin with, let us introduce the definition of the weighted matching polynomials of weighted $k$-graphs.
Let $H$ be a $k$-graph and let $M$ be a subset of $E(H)$.
Denote by  $V(M)$ the set of vertices of $H$ each of which is an endpoint of one of the edges in $M$.
If no two distinct edges in $M$ share a common vertex, then $M$ is called a  {\it   matching} of $H$.
The set of matchings (including the empty set) of $H$ is denoted by $\mathcal{M}(H)$.
Let $\w: V(H)\cup E(H)\rightarrow \C$ be a weighting function on $H$.  We say $\w$ is {\it nonnegative} if $\w(v)\geq 0$ for any $v\in V(H)$ and $\w(e)> 0$ for any $e\in E(H)$.
The {\em weighted matching polynomial} of $(H,\w)$ is defined to be
 $$
\mu(H,\w,x)= \sum_{M \in \mathcal{M}(H)}(-1)^{|M|}\prod_{e \in E(M)}\mathbf{w}(e)^k \prod_{v \in V(H)\backslash V(M)}(x-\w(v)).
$$
Obverse that, if we choose the weighting function $\w$ such that $\w(v)=0$ for all $v\in V(H)$ and $\w(e)=1$ for all $e\in E(H)$, then $\mu(H,\w,x)$ is exactly the matching polynomial $\mu(H,x)$ of $H$ defined in \cite{Suejc}.
For a $k$-tree $T$ of order $n$, one may check that
$$\mu(T,x)=x^{n-km(T)}\varphi(T,x).$$
The matching polynomial of hypergraph is a natural extension of the matching polynomial of graph which is introduced by Heilmann and Lieb \cite{Heilmann}.
For more results in matching polynomial theory, we refer the reader to \cite{Godsil3,Godsil,Heilmann,Lovasz}.

We are now ready to show the main result of this paper, which is an extension of Theorem \ref{ClarkCoopertheorem11} and the work of Zhang, Kang, Shan and Bai \cite{Zhang}, and gives a complete characterization of the eigenvalues of symmetric tensor whose underlying hypergraph are a $k$-tree by means of the weighted matching polynomial.

\begin{theorem}\label{maintheorem}
Let $T$ be a $k$-tree equipped with a weighting function $\w: V(H)\cup E(H)\rightarrow \C$, where $k\geq3$.
Then, for every $v\in V(T)$, $\w(v)$ is an eigenvalue of the adjacency tensor $\A(T,\w)$ of $(T,\w)$.
Furthermore, if $\lambda\neq \w(v)$ for any $v\in V(T)$, then $\lambda$ is an eigenvalue of $\A(T,\w)$ if and only if
there exists a subtree $T'$ of $T$ such that $\lambda$ is a root of  $\mu(T',\w,x)$.
In particular, if $\w$ is nonnegative, then the spectral radius of $\A(T,\w)$ is equal to the largest root of $\mu(T,\w,x)$.
\end{theorem}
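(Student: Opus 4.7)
The plan is to prove the three assertions in turn: that each $\w(v_0)$ is automatically an eigenvalue by a direct construction, the matching-polynomial characterization by a rooted-tree analysis in both directions, and the spectral-radius statement by invoking Perron--Frobenius together with a monotonicity argument.

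For the first claim, I would exhibit, for fixed $v_0\in V(T)$, the indicator vector $\x$ with $x_{v_0}=1$ and $x_u=0$ for every other $u$. For any vertex $u$ the eigenequation of $\A(T,\w)$ at $u$ reads
\[
\w(u)\,x_u^{k-1}+\sum_{e\ni u}\w(e)^k\prod_{v\in e\setminus\{u\}}x_v \;=\; \lambda\, x_u^{k-1}.
\]
Since $k\geq 3$, every edge incident to $u$ contains at least two vertices other than $u$; at most one of them can be $v_0$, so the product above always contains a zero factor. The equation thus reduces to $\w(v_0)=\w(v_0)$ at $u=v_0$ and to $0=0$ elsewhere, confirming that $\w(v_0)$ is an eigenvalue.

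For the characterization in part (2), the essential tool is a rooted-tree recursion for $\mu(T,\w,x)$. Rooting $T$ at $v$ with incident edges $e_1,\dots,e_{d(v)}$, letting $T_{i,j}$ ($1\leq j\leq k-1$) denote the subtrees rooted at the other endpoints $v_{i,j}$ of $e_i$, and writing $\overline{T}_{i,j}$ for $T_{i,j}$ with its root deleted, a case split on whether $v$ is unmatched or matched by some $e_i$ yields
\[
\mu(T,\w,x) = (x-\w(v))\prod_{i,j}\mu(T_{i,j},\w,x) - \sum_{i=1}^{d(v)}\w(e_i)^k\prod_{j}\mu(\overline{T}_{i,j},\w,x)\prod_{(i',j'):\,i'\neq i}\mu(T_{i',j'},\w,x).
\]
For the ``if'' direction, given a subtree $T'$ and a root $\lambda\neq \w(v)$ of $\mu(T',\w,x)$, I would root $T'$, recursively define $x_u$ for $u\in V(T')$ as appropriate ratios of $\mu$-values at $\lambda$ of the children subtrees, and set $x_u=0$ outside $V(T')$; each internal eigenequation then collapses via the recursion above, the equation at the root becomes equivalent to $\mu(T',\w,\lambda)=0$, and the equations at vertices outside $V(T')$ vanish as in part (1). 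For the ``only if'' direction, starting from an eigenvector $\x$ and using $\lambda-\w(u)\neq 0$ to invert the leaf equations, I would identify a connected sub-hypergraph $T'$ of $T$ (which, thanks to the tree structure, is itself a subtree) on which the eigenequations propagate non-trivially, and then show that eliminating the $x_u$ from the leaves of $T'$ inward collapses the system precisely to $\mu(T',\w,\lambda)=0$.

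For the spectral-radius statement in part (3), Perron--Frobenius theory for nonnegative tensors guarantees that $\rho(\A(T,\w))$ is itself an eigenvalue; by parts (1) and (2) it is either some $\w(v)$ or the largest real root of $\mu(T',\w,x)$ for some subtree $T'\subseteq T$. A monotonicity principle for the largest root of weighted matching polynomials under taking subtrees --- provable either by a Heilmann--Lieb style real-rootedness argument or by direct interlacing in the weighted setting --- then shows that the largest root of $\mu(T,\w,x)$ dominates all these candidates (noting that $\w(v)$ itself is the root of the single-vertex subtree polynomial $x-\w(v)$), completing the proof. I expect the main obstacle to be the ``only if'' direction of part (2): pinpointing the correct subtree $T'$ from a general eigenvector and showing that the elimination procedure along its leaves produces exactly the weighted matching polynomial, rather than some spurious multiple, requires careful bookkeeping and a justification that the support pattern of $\x$ cannot disconnect $T'$.
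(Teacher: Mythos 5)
Your first part coincides with the paper's argument, up to one slip worth flagging: the eigenequation at $u$ reads $\w(u)x_u^{k-1}+\sum_{e\ni u}\w(e)\,\x^{e\setminus\{u\}}=\lambda x_u^{k-1}$ with $\w(e)$, not $\w(e)^k$; the $k$-th power in $\mu(T,\w,x)$ only emerges because each edge weight is collected once at each of its $k$ vertices during elimination, and if you carry $\w(e)^k$ in the eigenequation literally your recursion lands on the wrong polynomial. For the characterization, your route is genuinely different from the paper's. The paper factors the argument through Lu and Man's $\alpha$-normal labelling: it first reduces to eigenpairs with full support on a component of the support of the eigenvector (Theorem \ref{maintheorem321}, which also settles your worry about the support "disconnecting" $T'$ --- one simply takes a single component), then shows full-support eigenpairs correspond to consistently $\lambda$-normal labellings (Lemma \ref{mainlemma1}; condition (C3) is vacuous on trees), and finally proves by pendant-edge induction that $\lambda$-normality is equivalent to the vanishing of a multivariate matching polynomial (Lemma \ref{lemma4111}). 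Your three-term recursion for $\mu(T,\w,x)$ is correct and your leaf-elimination is the same computation stripped of the labelling formalism; what it buys is directness, what it loses is that the bookkeeping you defer is exactly what the $\alpha$-normal induction organizes. One point you must still address in the "if" direction: your recursively defined $x_u$ are ratios of $\mu$-values of rooted subtrees evaluated at $\lambda$, and these denominators can vanish; the fix is to pass to a smaller subtree in that case, which is permissible since the statement only asserts existence of some $T'$.

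The genuine gap is in your treatment of the spectral radius. You use only that $\rho(\A(T,\w))$ is an eigenvalue, which by the first two parts places it among the roots of $\mu(T',\w,x)$ for \emph{some} subtree $T'$, and you then need the largest root of $\mu(\cdot,\w,x)$ to be monotone under passing to subtrees. You propose to obtain this from a Heilmann--Lieb real-rootedness or interlacing argument, but those are theorems about graphs ($k=2$); for $k\geq3$ the polynomial $\mu(T',\w,x)$ is not known to be real-rooted and there is no interlacing theory for hypergraph matching polynomials, so this auxiliary lemma is a substantial unproven claim, not a routine adaptation. The paper sidesteps it entirely: since $\w$ is nonnegative and $T$ is connected, $\A(T,\w)$ is weakly irreducible, so Perron--Frobenius (Lemma \ref{perronthm}) supplies a \emph{strictly positive} eigenvector for $\rho$; positivity means full support, so the subtree produced by the characterization is $T$ itself and $\rho$ is directly a root of $\mu(T,\w,x)$. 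Conversely every root of $\mu(T,\w,x)$ is an eigenvalue (by the second statement, or the first if it equals some $\w(v)$), hence has modulus at most $\rho$, and $\rho$ is therefore the largest root. You should replace your monotonicity step with this use of the positivity of the Perron vector.
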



\begin{remark}
The condition `$k\geq3$' in Theorem \ref{maintheorem} is necessary, since the characteristic polynomial of $\A(T,\w)$ coincides with $\mu(T,\w,x)$ when $k=2$,
and it can be proved  by the classical recursive approach. See Theorem 8.5.3 of \cite{Lovasz} for the non-weighted version.
\end{remark}

Let $\L(T)$ (resp. $\Q(T)$) be the Laplacian tensor (resp. signless Laplacian tensor) of a $k$-tree $T$.
Clearly, if we choose the weight $\w$ such that $\w(v)=d_T(v)$ for all $v\in V(T)$ and $\w(e)=-1$ (resp. $1$) for all $e\in E(T)$,
then we immediately obtain an analogue of Theorem \ref{ClarkCoopertheorem11} for the Laplacian tensor (resp. signless Laplacian tensor) of $T$.

\begin{corollary}\label{corollarylq}
Let $T$ be a $k$-tree with $k\geq3$.
Then, for any $v\in V(T)$, the degree $d_T(v)$ of $v$ in $T$ is an eigenvalue of $\L(T)$ and $\Q(T)$, respectively.
Furthermore, if $\lambda\neq d_T(v)$ for any $v\in V(T)$, 
then
\begin{enumerate}

\item[{\rm (1)}]  $\lambda$ is an eigenvalue of $\L(T)$ if and only if there exists a subtree $T'$ of $T$ such that $\lambda$ is a root of the polynomial
$$\sum_{M \in \mathcal{M}(T')} (-1)^{(k+1)|M|} \prod_{v \in V(T')\backslash V(M)}(x-d_T(v)).$$

\item[{\rm (2)}]  $\lambda$ is an eigenvalue of $\Q(T)$ if and only if there exists a subtree $T'$ of $T$ such that $\lambda$ is a root of the polynomial
$$
\sum_{M \in \mathcal{M}(T')}(-1)^{|M|} \prod_{v \in V(T')\backslash V(M)}(x-d_T(v)).
$$
In particular, the spectral radius of $\Q(T)$ is equal to the largest root of the above polynomial.
\end{enumerate}
\end{corollary}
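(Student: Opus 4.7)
The plan is to specialize Theorem \ref{maintheorem} by choosing two weighting functions that realize $\Q(T)$ and $\L(T)$ as weighted adjacency tensors. For the signless Laplacian, I would set $\w(v) = d_T(v)$ for $v \in V(T)$ and $\w(e) = 1$ for $e \in E(T)$; this places $d_T(v)$ on the diagonal position $(v,\ldots,v)$ of $\A(T,\w)$ and keeps each off-diagonal edge entry equal to that of $\A(T)$, giving $\A(T,\w) = \D(T) + \A(T) = \Q(T)$. For the Laplacian, I would keep the same vertex weights but set $\w(e) = -1$, which negates the off-diagonal edge entries and yields $\A(T,\w) = \D(T) - \A(T) = \L(T)$.

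Once these identifications are in hand, the first conclusion of each case is immediate: since $\w(v) = d_T(v)$, the first assertion of Theorem \ref{maintheorem} tells us that $d_T(v)$ is an eigenvalue of $\A(T,\w)$, hence of $\Q(T)$ and $\L(T)$, for every $v \in V(T)$.

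For the polynomial characterizations, I would substitute the chosen weights into the weighted matching polynomial. In the $\Q(T)$ case, $\w(e)^k = 1^k = 1$, so $\mu(T',\w,x)$ simplifies to $\sum_M (-1)^{|M|}\prod_{v \in V(T')\setminus V(M)}(x - d_T(v))$, precisely the polynomial in item (2). In the $\L(T)$ case, $\w(e)^k = (-1)^k$ contributes a factor $(-1)^{k|M|}$ to each summand, which combined with the sign $(-1)^{|M|}$ gives $(-1)^{(k+1)|M|}$, matching item (1). The second assertion of Theorem \ref{maintheorem} then supplies the ``if and only if'' for both parts. The spectral radius claim in item (2) follows from the fact that the weighting used for $\Q(T)$ is nonnegative (since $d_T(v) \geq 0$ and $\w(e) = 1 > 0$), allowing the final clause of Theorem \ref{maintheorem} to apply; the analogous claim is absent for $\L(T)$ precisely because $\w(e) = -1$ is not positive.

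The one step that is not purely formal is the identification of $\A(T,\w)$ with $\Q(T)$ or $\L(T)$, which requires unpacking the convention defining the entries of the weighted adjacency tensor. This will be the main point to pin down carefully, but it is essentially definitional bookkeeping once the tensor conventions are fixed; no new ideas beyond Theorem \ref{maintheorem} are needed.
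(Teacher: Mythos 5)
Your proposal is correct and is exactly the paper's argument: the authors obtain the corollary by specializing Theorem \ref{maintheorem} to the weights $\w(v)=d_T(v)$, $\w(e)=\pm 1$ (an identification of $\A(T,\w)$ with $\L(T)$ and $\Q(T)$ already recorded in Section 2), with the same sign bookkeeping $(-1)^{|M|}(-1)^{k|M|}=(-1)^{(k+1)|M|}$ and the same appeal to nonnegativity for the spectral radius claim in the $\Q(T)$ case. Nothing further is needed.
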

In this sense,  it should be appropriate that we say two polynomials displayed in Corollary \ref{corollarylq} are the {\it Laplacian matching polynomial} and  the {\it signless Laplacian matching polynomial} of $T'$ with respect to $T$, respectively.

The paper is organized as follows.
In Section 2, we give some basic definitions and results of tensors and the spectra of weighted uniform hypergraphs.
Section 3 is devoted to extend $\alpha$-normal method proposed by Lu and Man \cite{Lu2016} to study the eigenvalues of weighted uniform hypergraphs.
We prove Theorem \ref{maintheorem} in Section 4
and conclude this paper in the last section.

\section{Preliminaries}

\subsection{Eigenvalues of tensors}
A {\it tensor} (also called \emph{hypermatrix}) $\A=(a_{i_{1} i_2 \ldots i_{k}})$ of order $k$ and dimension $n$ refers to a multi-dimensional array with entries $a_{i_{1}i_2\ldots i_{k}}\in \C$ for all $i_{j}\in [n]:=\{1,2,\ldots,n\}$ and $j\in [k]$.
A tensor $\A=(a_{i_{1} i_2 \ldots i_{k}}) $ is called \emph{symmetric} if its entries are invariant under any permutation of their indices, that is,
$a_{i_{1}i_2\ldots i_{k}}=a_{i_{\sigma(1)}i_{\sigma(2)}\ldots i_{\sigma(k)}}$ for any permutations $\sigma$ on $[k]$. The associated digraph $D(\A)$ with $\A$ is a digraph with vertex set $[n]$, which has arcs $(i_1,i_2), \ldots, (i_1,i_k)$ for each nonzero entry $a_{i_{1} i_2 \ldots i_{k}}$
of $\A$. The tensor $\A$ is called {\it weakly irreducible} if $D(\A)$ is strongly connected \cite{Friedland}.

For a vector $\x=(\x_1, \x_2,\ldots,\x_n)^{\top}\in \mathbb{C}^{n}$,
let $\A \x^{k-1}$ be the vector in $\mathbb{C}^n$ whose $i$th component is defined by
\begin{align*}
(\A \x^{k-1})_i & =\sum_{i_{2},\ldots,i_{k}\in [n]}a_{ii_{2}\cdots i_{k}}\x_{i_{2}}\ldots \x_{i_k},
\end{align*} for any $i \in [n]$.
In 2005, Qi \cite{Qi} and Lim \cite{Lim} independently introduced the eigenvalues of tensors.
For a scalar $\lambda \in \mathbb{C}$, if the polynomial system 
$$\A \x^{k-1}=\lambda \x^{[k-1]},$$
where $\x^{[k-1]}:=(\x_1^{k-1}, \x_2^{k-1},\ldots,\x_n^{k-1})^{\top}$,
has a solution $\x \in \mathbb{C}^{n}\backslash \{{\mathbf{0}}\}$,
then $\lambda $ is called an \emph{eigenvalue} of $\A$ and $\x$ is called an \emph{eigenvector} of $\A$ associated with $\lambda$.
Moreover, the pair $(\lambda,\x)$ is called an \emph{eigenpair} of $\A$.
The \emph{determinant} of $\A$, denoted by $\det \A$, is defined as the resultant of the polynomials $\A  \x^{m-1}$,
and the \emph{characteristic polynomial} $\phi_\A(x)$ of $\A$ is defined as $\det(x\I-\A)$.
We know from \cite{Qi} that $\la$ is an eigenvalue of $\A$ if and only if it is a root of $\phi_\A(x)$.
The {\it spectral radius $\rho(\A)$} of $\A$ is the largest modulus of the eigenvalues of $\A$.
The Perron-Frobenius theorem was generalized from nonnegative matrices to nonnegative tensors. Here we list the parts of the theorem that we need.

\begin{lemma}[\cite{Friedland},\cite{YY}]\label{perronthm}
If $\A$ is a nonnegative weakly irreducible tensor of order $k$ and dimension $n$, then $\rho(\A)$ is an eigenvalue of $\A$ with the unique positive eigenvector, up to a positive scalar.
\end{lemma}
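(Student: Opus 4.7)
The plan is to prove Lemma \ref{perronthm} by adapting the classical Perron--Frobenius argument to the nonlinear tensor setting, combining Brouwer's fixed-point theorem on the standard simplex with a Collatz--Wielandt characterization of the spectral radius, and a positivity bootstrap driven by strong connectedness of $D(\A)$.

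First I would introduce the homogeneous operator $F\colon \R_{\geq 0}^n\setminus\{\mathbf{0}\}\to\R_{\geq 0}^n$ defined by $F(\x)=(\A\x^{k-1})^{[1/(k-1)]}$. Since $\A$ is nonnegative, $F$ preserves the nonnegative orthant and is positively homogeneous of degree one, so after a harmless perturbation $\A\mapsto\A+\varepsilon\mathcal{J}$ (with $\mathcal{J}$ the all-ones tensor) to avoid boundary division-by-zero, its normalization $\widetilde{F}(\x)=F(\x)/\|F(\x)\|_1$ becomes a continuous self-map of the simplex $\Delta_{n-1}$. Brouwer's theorem produces a fixed point $\x_\varepsilon$ corresponding to an eigenpair of $\A+\varepsilon\mathcal{J}$, and letting $\varepsilon\to 0^+$ with a subsequential limit yields a nonnegative eigenpair $(\mu,\x)$ of $\A$ with $\mu\geq 0$.

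Next, weak irreducibility would be used to upgrade $\x$ to a strictly positive eigenvector. If $S=\{i:\x_i=0\}$ were nonempty, the eigenvalue equations at $i\in S$, together with nonnegativity of $\A$, would force every arc of $D(\A)$ starting in $S$ to terminate in $S$, making $S$ a proper absorbing vertex subset and contradicting strong connectedness of $D(\A)$; hence $\x>\mathbf{0}$. The identification $\mu=\rho(\A)$ then follows from the Collatz--Wielandt bound
$$
\mu \;=\; \sup_{\y>\mathbf{0}}\,\min_{i\in [n]}\,\frac{(\A\y^{k-1})_i}{\y_i^{k-1}},
$$
which, via componentwise comparison of $|\y|$ against $\x$ for any eigenpair $(\lambda,\y)$, delivers $|\lambda|\leq\mu$ and hence $\mu=\rho(\A)$. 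Uniqueness is then obtained by the classical scaling trick: if $\y>\mathbf{0}$ is another positive eigenvector at eigenvalue $\mu$, setting $t=\min_i(\x_i/\y_i)$ makes $\x-t\y\geq\mathbf{0}$ vanish at some coordinate; substituting into $\A\x^{k-1}=\mu\x^{[k-1]}$ and rerunning the absorbing-set argument forces $\x=t\y$.

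The main obstacle is the positivity upgrade under merely \emph{weak} (rather than tensor-theoretic) irreducibility, since weak irreducibility is strictly weaker than the Chang--Pearson--Zhang notion of tensor irreducibility. The $\varepsilon$-perturbation plus absorbing-set bookkeeping sketched above is the cleanest route around this, and is precisely the conceptual refinement, due to Friedland and Yang--Yang, that underlies the form of the lemma stated here.
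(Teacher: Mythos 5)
First, a point of reference: the paper does not prove this lemma at all --- it is imported verbatim from \cite{Friedland} and \cite{YY} --- so your attempt is being measured against those sources rather than against an argument appearing in the paper.

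Your outline follows the standard Brouwer-plus-Collatz--Wielandt template, but the decisive step --- upgrading the limiting nonnegative eigenvector to a positive one --- is exactly where it fails. Suppose $S=\{i:\x_i=0\}$ is nonempty and $\mu>0$. The equation $\sum a_{ii_2\cdots i_k}\x_{i_2}\cdots\x_{i_k}=0$ at $i\in S$ only forces, for each nonzero entry $a_{ii_2\cdots i_k}$, that \emph{at least one} of $i_2,\ldots,i_k$ lies in $S$; it does not force every arc $(i,i_j)$ of $D(\A)$ to end in $S$. Hence $S$ need not be absorbing, and strong connectedness of $D(\A)$ is not contradicted. (A concrete instance with $k=3$, $n=3$: the tensor with $a_{112}=a_{213}=a_{233}=a_{322}=1$ and all other entries zero is weakly irreducible, yet $\big(1,(0,1,1)^{\top}\big)$ is an eigenpair.) What you have written is really the positivity proof for Chang--Pearson--Zhang irreducible tensors; you correctly flag that weak irreducibility is strictly weaker, but the ``$\varepsilon$-perturbation plus absorbing-set bookkeeping'' does not close the gap, because positivity of the perturbed Perron vectors is precisely what can be lost in the limit $\varepsilon\to0^+$. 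The actual proofs in \cite{Friedland} and \cite{YY} need genuinely different machinery at this point (nonlinear Perron--Frobenius theory via Hilbert's projective metric, respectively a careful normalization-and-limit analysis). A secondary issue: in the uniqueness step you cannot ``substitute $\x-t\y$ into $\A\x^{k-1}=\mu\x^{[k-1]}$'', since $\x\mapsto\A\x^{k-1}$ is not linear. The correct version takes $t=\min_i\x_i/\y_i$, compares $\A\x^{k-1}\ge t^{k-1}\A\y^{k-1}$ at a coordinate where the minimum is attained, and propagates the resulting term-by-term equalities along \emph{all} arcs of $D(\A)$; that step, unlike the positivity step, genuinely does go through under weak irreducibility alone.
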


\subsection{Weighted uniform hypergraphs}

A {\it $k$-uniform hypergraph} (or simply  {\it $k$-graph}) $H=(V(H), E(H))$ consists of a finite vertex set $V(H)$
 and an edge set $E(H)$,
where each edge $e\in E(H)$ is a $k$-element subset of $V(H)$.
For $v\in V(H)$, denote by $E_H(v)$ the set of edges of $H$ containing $v$. The
cardinality of $E_H(v)$ is called the {\it degree} of $v$ in $H$, denoted by $d_H(v)$.

A {\it walk} in a $k$-graph $H$ is a sequence of alternate vertices and edges: $v_0 e_1 v_1 \ldots e_\ell v_\ell$,
where ${v_i, v_{i+1}}\in e_i$ for $i = 0, 1, \ldots, \ell-1$.
The walk $P$ is called a {\it path} if no vertices and edges appeared in $P$ are repeated, and is
called a {\it cycle} if no vertices and  edges appeared in $P$ are repeated, expect $v_0=v_{\ell}$.
A $k$-graph $H$ is {\it connected} if every two vertices of $H$ are connected by a walk
and is called a {\it $k$-uniform hypertree} (or simply  {\it $k$-tree}) if $H$ is in addition acyclic. 

If a $k$-graph $H$ equipped with a weighting function $\w: V(H)\cup E(H)\rightarrow \C$, then $(H,\w)$ is called {\it weighted $k$-uniform hypergraph} (or simply  {\it weighted $k$-graph}).
Let  $V(H)=\{v_1,v_2,\ldots,v_n\}$.
The {\it adjacency tensor} of $(H,\w)$ is defined as $\A(H,\w)=(a_{i_{1}i_{2}\cdots i_{k}})$,
a tensor of order $k$ dimensional $n$, where
\[a_{i_{1}i_{2}\cdots i_{k}}=\left\{
 \begin{array}{ll}
  \w(v) , &  \mbox{if~}  i_{1}=i_{2}=\cdots =i_{k}=v\in V(H);\\
\frac{\w(e)}{(k-1)!}, &  \mbox{if~} e=\{v_{i_{1}},v_{i_{2}},\cdots,v_{i_{k}}\} \in E(H);\\
  0, & \mbox{otherwise}.
  \end{array}\right.
\]
It is not hard to check that $\A(H,\w)$ is weak irreducible when $H$ is connected and $\w$ is nonnegative \cite{Friedland, YY}.

Clearly, if we choose the weight $\w$ such that $\w(v)=0$ for all $v\in V(H)$ and $\w(e)=1$ for all $e\in E(H)$, then $\A(H,\w)$ is exactly the adjacency tensor $\A(H)$ of $H$ defined in \cite{Cooper}.
Also, if we choose the weight $\w$ such that $\w(v)=d_H(v)$ for all $v\in V(H)$ and $\w(e)=-1$ (resp. $1$) for all $e\in E(H)$,
then $\A(H,\w)$ is exactly the Laplacian tensor $\L(T)$ (resp. signless Laplacian tensor $\Q(T)$) of $H$ defined in  \cite{QiCommu}.

Denote by $\x^U:=\Pi_{v \in U} \x_v$ for a subset $U$ of $V(H)$ and a vector $\x=(\x_1, \ldots,\x_n)^{\top}\in \C^{n}$.
Writing $\A= \A(H,\w)$,  we have
\begin{equation}\label{eigenvectorequation21}
(\A\x^{k-1})_v=\w(v)\x_v^{k-1}+\sum_{e\in E_H(v)}\w(e) \x^{e\backslash \{v\}}
\end{equation}
for each $v \in V(H)$.
So the  equation $\A \x^{k-1}=\lambda \x^{[k-1]}$ can be written as
\begin{equation}\label{eigenvectorequation}
\big(\lambda-\w(v)\big) \x_v^{k-1} =\sum_{e\in E_H(v)} \w(e)\x^{e\backslash \{v\}}
\end{equation}
for all $ v \in V(H)$.

\subsection{The eigenvalues of the weighted uniform hypertrees}

Let $\Gamma=(H,\w)$ be a weighted $k$-graph.
For a subset $U$ of $V(H)$, denote by $H[U]$ the subgraph of $H$ induced by $U$, that is,  a graph with $V(H[U])=U$ and $E(H[U])=\{e\in E(H):e \subset U\}$.
We shall use $\Gamma[U]:=(H[U],\w [U])$ to denote the weighted subgraph of $\Gamma$ induced by $U$,
where $\w [U]:=\w|_{U\cup E(H[U])}$. 
If the context is clear, we simply write $(H',\w)$ instead of $(H',\w[V(H')])$ when $H'$ is an induced subgraph of $H$.
An edge $e$ is called a {\it pendent edge} if it contains exactly $k-1$ vertices of degree one. We
use $H \setminus e$ to denote the $k$-graph obtained from $H$ by deleting  $e$ along with resultant isolated vertices.

\begin{lemma} \label{corevertex}
Let $(H,\w)$ be a weighted $k$-graph with $k\geq 3$, and let $e$ be an edge of $H$ containing at least two degree-one vertices.
If $\lambda$ is an eigenvalue of $\A(H\setminus e,\w)$, then $\lambda$ is an eigenvalue of $\A(H,\w)$.
\end{lemma}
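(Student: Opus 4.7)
The plan is to start with an eigenpair $(\lambda,\x)$ of $\A(H\setminus e,\w)$ and extend $\x$ by zeros on the vertices removed along with $e$, obtaining a nonzero vector $\y\in \C^{V(H)}$ that we claim is an eigenvector of $\A(H,\w)$ with eigenvalue $\lambda$. Concretely, let $u_1,u_2\in e$ be two vertices of degree one in $H$, and let $U=V(H)\setminus V(H\setminus e)$; by the degree-one hypothesis, every vertex in $U$ lies in $e$ (and in no other edge), and in particular $\{u_1,u_2\}\subseteq U$. Set $\y_v=\x_v$ for $v\in V(H\setminus e)$ and $\y_v=0$ for $v\in U$. Since $\x\neq \mathbf{0}$, we have $\y\neq \mathbf{0}$.

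The verification of $\A(H,\w)\y^{k-1}=\lambda\y^{[k-1]}$ via the coordinatewise formula \eqref{eigenvectorequation} splits into three cases according to the role of $v\in V(H)$. Throughout, the key observation is that for every $w\in e$ one has $\{u_1,u_2\}\cap(e\setminus\{w\})\neq\emptyset$ (because $|\{u_1,u_2\}|=2>1=|\{w\}|$), so $\y^{e\setminus\{w\}}$ contains a zero factor and hence vanishes. This is precisely where the hypothesis of two degree-one vertices gets used. If $v\notin e$, then $E_H(v)=E_{H\setminus e}(v)$ and every edge at $v$ is supported on $V(H\setminus e)$, so the equation at $v$ in $(H,\w)$ coincides with the one in $(H\setminus e,\w)$ and is inherited from $\x$. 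If $v\in e$ and $d_H(v)\geq 2$, then $v\in V(H\setminus e)$, and in $\A(H,\w)\y^{k-1}$ the contribution from $e$ is $\w(e)\y^{e\setminus\{v\}}=0$ by the key observation, while the remaining contributions match the $(H\setminus e,\w)$-equation at $v$. If $v\in U$, both sides vanish: the left-hand side because $\y_v=0$, the right-hand side because $e$ is the only edge incident to $v$ and $\y^{e\setminus\{v\}}=0$ again by the key observation.

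Thus $(\lambda,\y)$ is an eigenpair of $\A(H,\w)$, and the lemma follows. The only real content is the combinatorial choice of extension; the main (mild) obstacle is bookkeeping the three cases and making sure the two distinguished degree-one vertices $u_1,u_2$ remain in $e\setminus\{w\}$ for every relevant $w$, which is exactly why one degree-one vertex does not suffice and the hypothesis is stated with ``at least two.''
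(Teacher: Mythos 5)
Your proposal is correct and follows essentially the same route as the paper: extend the eigenvector by zeros on the degree-one vertices of $e$, and use the fact that since at least two such vertices exist, $e\setminus\{v\}$ always retains a zero coordinate, killing the contribution of $e$ in every coordinate equation. The paper zeroes out the full set of degree-one vertices of $e$ (which coincides with your $U$), so the constructions agree.
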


\begin{proof}
Let $C(e)=\{v_1, \ldots, v_s\}$ be the set of all degree-one vertices in $e$, where $s\geq 2$.
Suppose that $(\lambda,\y)$ is an eigenpair of $\A(H\setminus e,\w)$.
Define a vector $\x\in \C^{n}$ by
\[ \x_v=\left\{
 \begin{array}{ll}
  \y_v , &  \mbox{if~} v\in V(H\setminus e);\\
  0, & \mbox{if~} v\in C(e).
  \end{array}\right.
\]
Write $\A= \A(H,\w)$.
For any $v\in V(H\setminus e)$, by  \eqref{eigenvectorequation}, one may check that
\begin{align*}
(\A \x^{k-1})_v &=\w(v)\x_v^{k-1}+\sum_{f\in E_H(v)}\w(f) \x^{f\backslash \{v\}}\\
& = \w(v)\y_v^{k-1}+\sum_{f\in E_{H\setminus e}(v)}\w(f)\y^{f\backslash \{v\}}\\
& = \lambda \y_v^{k-1} = \lambda \x_v^{k-1}.
\end{align*}
Also, for any $u\in C(e)$,
$$(\A\x^{k-1})_u=\w(u)\x_u^{k-1}+ \w(e) \x^{e\backslash \{u\}}=0= \lambda \x_u^{k-1} .$$
Thus, $(\lambda,\x)$ is an eigenpair of $\A$, as desired.
\end{proof}

\begin{lemma}[\cite{ClarkCooper}] \label{corevertexCC}
Let $T$ be a $k$-tree.
If $T'$ is a subtree of $T$, then there exists a sequence of edges $(e_1, \ldots, e_s)$ such that, for any $i$ $(1 \leq i \leq s)$, $e_i$ is a pendant edge of $T_{i-1}$,
 where $T_0= T$, $T_{i} = T_{i-1}\setminus e_i $, and $T_s = T'$.
\end{lemma}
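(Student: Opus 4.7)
The plan is to induct on $|E(T)| - |E(T')|$, the number of edges to be deleted. The base case $|E(T)| = |E(T')|$ forces $T = T'$ by connectivity, so the empty sequence works. For the inductive step it suffices to produce a single pendant edge $e_1$ of $T$ with $e_1 \notin E(T')$: then $T_1 := T \setminus e_1$ is a $k$-tree still containing $T'$ as a subtree, and the induction hypothesis applied to $(T_1, T')$ supplies the remaining sequence $(e_2, \ldots, e_s)$, to which we prepend $e_1$.

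To produce $e_1$, I would work with the incidence tree $\tilde T$ of $T$: the bipartite graph on $V(T) \cup E(T)$ with $v \sim e$ iff $v \in e$. Since $T$ is acyclic, $\tilde T$ is an ordinary tree; since $k \geq 3$, every edge of $T$ has $\tilde T$-degree $k \geq 3$, so the leaves of $\tilde T$ are precisely the degree-one vertices of $T$. Fix any $r \in V(T')$, root $\tilde T$ at $r$, and choose $e_1$ to be an edge of \emph{maximum} depth in the nonempty set $E(T) \setminus E(T')$. The claim is that $e_1$ is pendant in $T$. If some descendant edge $f$ of $e_1$ in the rooted $\tilde T$ satisfied $f \in E(T')$, then the unique $\tilde T$-path from $f$ to $r$, which must pass through $e_1$, would lie entirely in the subtree $\tilde{T'}$, forcing $e_1 \in E(T')$---a contradiction. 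Hence every descendant edge of $e_1$ lies in $E(T) \setminus E(T')$ at strictly greater depth, contradicting maximality. Therefore $e_1$ has no descendant edges at all, so its $k-1$ children in $\tilde T$ are $\tilde T$-leaves, i.e., degree-one vertices of $T$, making $e_1$ pendant.

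It remains to check compatibility: deleting $e_1$ removes only its $k-1$ degree-one vertices, and none of them lies in $V(T')$, because $e_1$ is their only incident edge in $T$ and $e_1 \notin E(T')$. Thus $T' \subseteq T_1$ and the induction closes. The main obstacle is producing the pendant edge $e_1$; the depth-maximality trick on the incidence tree handles it cleanly, and the mildly degenerate case where $T'$ is an isolated vertex is absorbed by rooting at that vertex and running the same argument on $E(T) \setminus E(T') = E(T)$.
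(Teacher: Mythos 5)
The paper offers no proof of this lemma---it is imported verbatim from Clark and Cooper---so there is no in-paper argument to measure you against; I can only assess your proof on its own terms. It is correct in the case that matters, and it is essentially the standard peeling argument: induct on $|E(T)|-|E(T')|$ and produce one deletable pendant edge outside $E(T')$. Your formalization via the incidence tree $\tilde T$ is clean and complete: rooting at $r\in V(T')$, a deepest edge $e_1\in E(T)\setminus E(T')$ can have no descendant edge lying in $E(T')$ (the path from such an $f$ to $r$ would pass through $e_1$ and lie in the connected subgraph $\tilde{T}'$ of the tree $\tilde T$, forcing $e_1\in E(T')$), hence by depth-maximality no descendant edge at all, so its $k-1$ children are degree-one vertices of $T$; its parent $u$ has its own parent edge (or equals $r$ with $d_T(r)\ge 2$ whenever $T$ has more than one edge), so $e_1$ contains exactly $k-1$ degree-one vertices and is pendant, and the deleted vertices avoid $V(T')$, closing the induction.

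The one thing you should not wave away is the final sentence about the isolated-vertex case: it is not ``absorbed.'' If $T'$ is a single vertex $r$, the peeling eventually reduces $T$ to the unique remaining edge through $r$; that edge has $k$ (not $k-1$) degree-one vertices, so it is not a pendant edge under the paper's definition, and deleting it would remove $r$ as a resultant isolated vertex anyway. So the lemma is simply false for edgeless $T'$ --- a defect of the statement as quoted, not of your argument --- and the honest fix is to add the hypothesis $E(T')\neq\emptyset$. This costs nothing in the paper's application: in Theorem \ref{maintheorem321} the relevant subtree supports an eigenvector, and when $\lambda\neq\w(v)$ for all $v$ such a component necessarily contains an edge.
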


Given a vector $\x\in \C^{n}$, the {\it support} of $\x$, denoted by $\s(\x)$,  is the set of all indices of nonzero coordinates of $\x$.
We are now ready to present the main result of this section.

\begin{theorem}\label{maintheorem321}
Let $(T,\w)$ be a weighted $k$-tree with $k\geq 3$.
Then, $\lambda$ is an eigenvalue of $\A(T,\w)$ if and only if there is a subtree $T'$ of $T$ such that $\lambda$ is an eigenvalue of $\A(T',\w)$ with the corresponding eigenvector having all elements nonzero.
\end{theorem}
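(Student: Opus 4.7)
The plan is to prove both implications using the tools already established in Section 2. For the \emph{if} direction, suppose $\lambda$ is an eigenvalue of $\A(T',\w)$ for some subtree $T'$ of $T$. By Lemma \ref{corevertexCC} there is a sequence $T = T_0, T_1, \ldots, T_s = T'$ in which each $e_i$ is a pendant edge of $T_{i-1}$ and $T_i = T_{i-1}\setminus e_i$. A pendant edge in a $k$-graph with $k\geq 3$ contains $k-1\geq 2$ degree-one vertices, which is exactly the hypothesis of Lemma \ref{corevertex}. Applying that lemma $s$ times in reverse order lifts $\lambda$ from an eigenvalue of $\A(T_s,\w) = \A(T',\w)$ up to one of $\A(T_0,\w) = \A(T,\w)$. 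Notably, the ``all-coordinates-nonzero'' assumption plays no role in this direction.

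For the \emph{only if} direction, let $(\lambda, \x)$ be an eigenpair of $\A(T, \w)$ and set $U := \s(\x)$. Since $T$ is acyclic, the induced subhypergraph $T[U]$ is also acyclic, and I pick any connected component $C$ of $T[U]$, which is therefore a subtree of $T$. By construction $V(C) \subseteq U$, so $\x|_{V(C)}$ has every coordinate nonzero. The remaining task is to verify that $\x|_{V(C)}$ satisfies the eigenvalue equation \eqref{eigenvectorequation} for $\A(C,\w)$ at every $v \in V(C)$.

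Starting from the given equation $(\lambda - \w(v))\x_v^{k-1} = \sum_{e \in E_T(v)} \w(e)\,\x^{e\setminus\{v\}}$, I split the right-hand sum according to whether $e\in E_C(v)$ and show each $e \in E_T(v) \setminus E_C(v)$ contributes zero. If some vertex of $e \setminus \{v\}$ lies outside $U$, then $\x^{e\setminus\{v\}}=0$ and the term vanishes automatically. Otherwise $e \subseteq U$, making $e$ an edge of $T[U]$; but $v \in V(C) \cap e$ and $C$ is a connected component of $T[U]$, so the entire edge lies in $V(C)$, forcing $e \in E_C(v)$---a contradiction. The sum therefore collapses to $\sum_{e \in E_C(v)} \w(e)\,\x^{e\setminus\{v\}}$, as required.

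The main delicate point is exactly this case-split in the ``only if'' direction: a naive argument would overlook edges fully inside $\s(\x)$ that could a priori straddle two different components of $T[U]$. This possibility is ruled out precisely because distinct components of a hypergraph share no edges, and once that observation is in place both directions go through cleanly.
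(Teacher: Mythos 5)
Your proof is correct and takes essentially the same route as the paper: sufficiency by combining Lemma~\ref{corevertexCC} with repeated applications of Lemma~\ref{corevertex} (a pendant edge having $k-1\geq 2$ degree-one vertices), and necessity by restricting the eigenvector to a connected component of $T[\s(\x)]$. Your explicit case analysis showing that edges outside the chosen component contribute zero merely fills in what the paper leaves as ``one may check.''
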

\begin{proof}
The sufficiency follows from Lemma \ref{corevertex} and Lemma \ref{corevertexCC}.
For the necessity, let $(\lambda,\x)$ be an eigenpair of $\A(T,\w)$. Let $T'$ be a component of $T[\s(\x)]$. Also, let $\hat{\x}$ denote the nonzero projection (by restriction) of $\x$ onto  $\C^{|\s(\x)|}$.
 Write $\Gamma'=(T', \w[V(T')])$.
Then, for any $v\in V(T')$, by \eqref{eigenvectorequation}, one may check that
\begin{align*}
(\A(\Gamma') \hat{\x}^{k-1})_v
& = \w(v){\hat{\x}}_v^{k-1}+\sum_{e\in E_{T'}(v)}\w(e) {\hat{\x}}^{e\backslash \{v\}}\\
& = \w(v)\x_v^{k-1}+\sum_{e\in E_T(v)}\w(e) \x^{e\backslash \{v\}}\\
& = \lambda\x_v^{k-1} = \lambda{\hat{\x}}_v^{k-1},
\end{align*}
which implies that $(\lambda,\hat{\x})$ is an eigenvector of $\A(\Gamma')$.
The necessity follows from that each entry of $\hat{\x}$ is nonzero and $T'$ is a subtree of $T$.
\end{proof}

\section{ The $\alpha$-normal method for weighted uniform hypergraphs}

 In \cite{Lu2016}, Lu and Man introduced the $\alpha$-normal method to investigate the spectral radii of $k$-graphs.
The method is a powerful tool to deal with the eigenvalues of tensor \cite{Bai,Zhang} and the
$p$-spectral radii of $k$-graphs \cite{Liu}.
In this section, we generalize their method to weighted $k$-graphs by making some appropriate adjustments.

Let  $\Gamma=(H,\w)$ be a weighted $k$-graph. In what follows, we always assume that  $w(e) \neq 0$ for each $e\in E(H)$ unless we mention it.
A {\it weighted vertex-edge incidence matrix} $B$ of $\Gamma$ is a complex matrix whose rows are indexed by $V(H)$ and columns are indexed by $E(H)$ such that for any $v\in V(H)$ and any $e\in E(H)$, the entry $B(v, e)\neq 0$ if and only if  $v\in e$.

\begin{definition}\label{normaldefinition}
Let $\Gamma=(H,\w)$ be a weighted $k$-graph, and  let $\{\lambda_e\}_{e\in E(H)}$ be an ordered sequence consisting of complex numbers such that $\lambda_e\neq \w(v)$ for any $v\in e, e \in E(H)$. $\Gamma$ is called {\it $\{\lambda_e\}$-normal with respect to $B$} if there exists a weighted incidence matrix $B$ of $H$ satisfying
\begin{enumerate}

\item[{\rm (C1)}] $\sum\limits_{e \in E_H(v)} B(v,e)=1$ for any $v\in  V(H)$.

\item[{\rm (C2)}]
 $\prod\limits_{v \in e} B(v,e)=\prod\limits_{v \in e}\frac{\w(e) }{ \lambda_e-\w(v) }$ for any $e \in E(H)$.

\end{enumerate}
If $\lambda_e=\lambda$ for any $e \in E(H)$, then $\Gamma$ is called {\it $\lambda$-normal}.
 Moreover, $\Gamma$ is called {\it consistently $\lambda$-normal with respect to $B$} if $\Gamma$ is $\lambda$-normal and
\begin{enumerate}

\item[{\rm (C3)}] For any cycle $v_0,e_1,v_1, \ldots, e_\ell, v_\ell(=v_0)$ of $H$,
             $$ \prod_{i=1}^\ell \frac{B(v_i,e_i) (\lambda-\w(v_{i})) }{B(v_{i-1},e_i)(\lambda-\w(v_{i-1}))} =1.$$
\end{enumerate}
\end{definition}

\begin{remark}\label{fact}
Assume that $\Gamma$ is $\{\lambda_e \}$-normal with respect to $B$. If $v\in V(H)$ with $d_H(v)=1$, then $B(v,e)=1$ for $e\in E_H(v)$.
\end{remark}

In what follows, we simply say that $\Gamma$ is $\{\lambda_e \}$-normal or $\lambda$-normal if  the weighted incidence matrix $B$ is clear from the context.

\begin{lemma}\label{mainlemma1}
Let $\Gamma=(H,\w)$ be a connnect weighted $k$-graph of order $n$, and let $\x\in \C^{n}$.
If $\lambda\neq \w(v)$ for any $v\in V(H)$, then $(\lambda,\x)$ is an eigenpair of $\A(\Gamma)$ with $\s(\x)= V(H)$ if and only if $\Gamma$ is consistently $\lambda$-normal.
\end{lemma}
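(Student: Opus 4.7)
The plan is to reduce both directions to a single per-incidence identity
\begin{equation*}
B(v,e)(\lambda-\w(v))\x_v^{k-1} \;=\; \w(e)\,\x^{e\setminus\{v\}}, \qquad v\in e\in E(H),
\end{equation*}
which I will label $(\star)$. The reason is that summing $(\star)$ over $e\in E_H(v)$ together with (C1) immediately recovers the eigenvector equation \eqref{eigenvectorequation}, so arranging $(\star)$ at every incidence is equivalent to having $(\lambda,\x)$ be an eigenpair, provided $\s(\x)=V(H)$ so that the divisions below are legal.

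For necessity, given the eigenpair $(\lambda,\x)$, the natural choice
\begin{equation*}
B(v,e) \;:=\; \frac{\w(e)\,\x^{e\setminus\{v\}}}{(\lambda-\w(v))\,\x_v^{k-1}}
\end{equation*}
makes $(\star)$ hold tautologically and is well-defined and nonzero since $\lambda\neq \w(v)$ and $\x_v\neq 0$. Condition (C1) comes out by dividing \eqref{eigenvectorequation} through by $(\lambda-\w(v))\x_v^{k-1}$; (C2) comes out by multiplying $(\star)$ over $v\in e$ and using $\prod_{v\in e}\x^{e\setminus\{v\}}=\prod_{v\in e}\x_v^{k-1}$ to cancel the $\x$-factors; and (C3) follows from the computation
\begin{equation*}
\frac{B(v_i,e_i)(\lambda-\w(v_i))}{B(v_{i-1},e_i)(\lambda-\w(v_{i-1}))} \;=\; \frac{\x^{e_i\setminus\{v_i\}}/\x_{v_i}^{k-1}}{\x^{e_i\setminus\{v_{i-1}\}}/\x_{v_{i-1}}^{k-1}} \;=\; \frac{\x_{v_{i-1}}^k}{\x_{v_i}^k},
\end{equation*}
whose cycle product telescopes to $\x_{v_0}^k/\x_{v_\ell}^k=1$.

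For sufficiency, given $B$ satisfying (C1)--(C3), I would construct $\x$ with $\s(\x)=V(H)$ by fixing a spanning hypertree $\T\subset H$ rooted at some $v_0$, setting $\x_{v_0}=1$, and processing the edges of $\T$ in BFS order. When an edge $e\in E(\T)$ is processed it has one already-defined vertex $v$ and $k-1$ fresh vertices $u_1,\ldots,u_{k-1}$; rearranging $(\star)$ at each $u_j$ yields
\begin{equation*}
\x_{u_j}^k \;=\; \frac{B(v,e)(\lambda-\w(v))}{B(u_j,e)(\lambda-\w(u_j))}\,\x_v^k,
\end{equation*}
so I would pick a $k$-th root for each of $\x_{u_1},\ldots,\x_{u_{k-2}}$ freely and then define $\x_{u_{k-1}}$ via the parent identity $\w(e)\prod_j\x_{u_j}=B(v,e)(\lambda-\w(v))\x_v^{k-1}$. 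The key algebraic check, supplied by (C2), is that this forced $\x_{u_{k-1}}$ is automatically a $k$-th root of its prescribed value; hence $(\star)$ holds at every incidence on $\T$ after the propagation.

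The main obstacle is verifying $(\star)$ at every non-tree edge $e'$. For any $u,w\in e'$, I would apply (C3) to the cycle built from $e'$ and the unique $\T$-path between $u$ and $w$, using the already-established $(\star)$ on $\T$ to rewrite each tree factor as $\x_{v_{i-1}}^k/\x_{v_i}^k$; telescoping gives $B(u,e')(\lambda-\w(u))\x_u^k=B(w,e')(\lambda-\w(w))\x_w^k$ for all $u,w\in e'$, and combined with (C2) this forces $(\w(e')\x^{e'\setminus\{u\}})^k = (B(u,e')(\lambda-\w(u))\x_u^{k-1})^k$, i.e., $(\star)$ at $e'$ up to a $k$-th root of unity. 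The final hurdle is to eliminate this spurious sign; I anticipate handling it by exploiting the leftover freedom in the BFS $k$-th-root choices -- a diagonal rescaling $\x\mapsto\eta\cdot\x$ with $\eta_v^k=1$ preserves $(\star)$ on $\T$ precisely when $\prod_{v\in e}\eta_v=1$ for each $e\in E(\T)$ -- and using (C3) once more to check that a globally compatible $\eta$ always exists.
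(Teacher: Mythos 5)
Your proof follows essentially the same route as the paper's. The necessity direction is identical: the same explicit weighted incidence matrix $B(v,e)=\w(e)\x^{e\setminus\{v\}}/\bigl((\lambda-\w(v))\x_v^{k-1}\bigr)$, with (C1) read off from the eigenvector equation, (C2) from multiplying $(\star)$ over $v\in e$, and (C3) from the telescoping cycle product. The sufficiency direction is the same propagation of values along paths via $k$-th roots, just organized as a BFS with one ``forced'' vertex per edge instead of the paper's closed path-product formula; your per-edge check that forcing one fresh vertex makes $(\star)$ hold at \emph{every} incidence of that edge (via (C2)) is exactly the content of the paper's display \eqref{construction}. In the situation where the lemma is actually invoked later (Theorem \ref{corollary322} and the proof of Theorem \ref{maintheorem}), $H$ is a hypertree: every edge in your BFS has $k-1\geq 2$ fresh vertices, there are no non-tree edges, and your argument is complete and correct.

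Two caveats concern the general-graph form of the statement. First, a connected $k$-graph with $k\geq 3$ need not contain a spanning sub-hypertree (e.g.\ two triples sharing two vertices), so ``fix a spanning hypertree $\T\subset H$'' should be replaced by an edge ordering in which each new edge meets the already-covered vertex set; an edge contributing no fresh vertex then leaves you with exactly the $k$-th-root-of-unity discrepancy you describe, with nothing left to force. Second, your final step --- that a rescaling $\eta$ with $\eta_v^k=1$, compatible with all tree edges, can always be chosen to kill the spurious roots of unity on all problematic edges simultaneously --- is asserted, not proved; (C3) only pins down ratios of $k$-th powers, and with several such edges the solvability of the system of correction equations is a genuine question. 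To be fair, the paper's own proof silently makes the same leap: equation \eqref{equationindependent} and the identification $\bigl(\prod_{u\in e}B(u,e)(\lambda-\w(u))\bigr)^{1/k}=\w(e)$ inside \eqref{construction} each hold only up to a $k$-th root of unity for an arbitrary branch choice. So you have surfaced a gap already present in the published argument rather than introduced a new one; since the lemma is only ever applied to hypertrees, nothing downstream is affected, but as written neither your sketch nor the paper fully justifies the sufficiency direction for general connected $k$-graphs.
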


\begin{proof}
Assume that $\lambda$ is an eigenvalue of $\A(\Gamma)$ with an eigenvector $\x=(\x_1, \ldots,\x_n)^{\top}$ satisfying $\s(\x)= V(H)$ and  $\lambda\neq \w(v)$ for any $v\in V(H)$.
Define a weighted incidence matrix $B$ of $\Gamma$ by
 \[
 B(v,e)=\left\{
 \begin{array}{ll}
\frac{\w(e) \x^e}{(\lambda-\w(v)) \x_v^k}, &  \mbox{if~} v \in e;\\
  0, & \mbox{otherwise}.
  \end{array}\right.
\]
By  \eqref{eigenvectorequation}, one may check that for any $ v\in V(H)$,
$$\sum\limits_{e\in E_H(v)} B(v,e)= \frac{\sum_{e\in E_H(v)}\w(e)\x^e}{(\lambda-\w(v)) \x_v^k}=1,$$
for each edge $e\in E(H)$,
$$\prod\limits_{ v \in e}B(v,e)
=\prod\limits_{ v \in e}\frac{\w(e) }{ \lambda -\w(v) },$$
and for any cycle $v_0 e_1 v_1 \cdots e_\ell v_\ell(=v_0)$ of $H$,
\begin{align*}
 &\prod_{i=1}^\ell \frac{B(v_i,e_i) (\lambda-\w(v_{i}))}{B(v_{i-1},e_i) (\lambda-\w(v_{i-1}))}\\
=&\prod_{i=1}^\ell \frac{  ((\lambda-\w(v_{i})) { \w(e_i) \x^{e_i}} /{\left(( \lambda-\w(v_ i))    \x_{v_i}^k\right)}  }
                    {  ((\lambda-\w(v_{i-1})) { \w(e_i) \x^{e_i}} / {((\lambda-\w(v_{i-1})) \x_{v_{i-1}}^k)}  }\\
 =& \prod_{i=1}^\ell \frac { \x_{v_{i-1}}^k}  { \x_{v_i}^k } =1.
\end{align*}
This proves the necessity.

We now prove the sufficiency.
Assume that $\Gamma$ is consistently $\lambda$-normal with respect to a weighted incidence matrix $B$.
Construct a vector $\y\in \C^{n}$ as follows:
Fix $v_0 \in V(H)$, and let $\y_{v_0}=1$. Set
$$\y_u=\left(\prod_{i=1}^\ell \frac{B(v_{i-1},e_i) (\lambda-\w(v_{i-1}) )}{B(v_i,e_i)
(\lambda-\w(v_{i}) ) }  \right)^{1\over k} $$
if there exists a path $v_0 e_1 v_1 \ldots e_\ell v_\ell (v_\ell=u)$ in $H$.
Notice that such path always exists since $H$ is connected, and moreover Condition (C3) (in Definition \ref{normaldefinition}) ensures that the value of $\y_u$ is independent of the choice of the path.
Observe that $\y_v\neq 0$ for each vertex $v$.
Thus, it remains to prove that $(\lambda,\y)$ is an eigenpair of $\A(\Gamma)$.
By the construction of $\y$, for any $e=\{v_{i_1},\ldots,v_{i_k}\}\in E(H)$, it is obvious that
\begin{align}\label{equationindependent}
\big(B(v_{i_1}, e)(\lambda-\w(v_{i_1})\big) ^{\frac{1}{k}}\y_{v_{i_1}}
=&\big(B(v_{i_2}, e)(\lambda-\w(v_{i_2})\big) ^{\frac{1}{k}}\y_{v_{i_2}}\\ \nonumber
 =& \cdots=\big(B(v_{i_k}, e)(\lambda-\w(v_{i_k})\big) ^{\frac{1}{k}}\y_{v_{i_k}}.
\end{align}
By \eqref{equationindependent} and Condition (C2), one may check that,  for a given vertex $v\in V(H)$ and for any $e\in E_H(v)$,
\begin{align}\label{construction}
\y^e
=&\prod_{u\in e}\left(   \left( \frac{B(v,e)(\lambda-\w(v) )}{B(u,e)(\lambda-\w(u))} \right)^{1\over{k}} \y_v  \right )\\ \nonumber
 =& \frac{B(v,e)(\lambda-\w(v)) \y_v^k }{\left(\prod_{u\in e}B(u,e)(\lambda-\w(u))\right)  ^{1\over{k}}  }\\\nonumber
  =&\frac{B(v,e)(\lambda -\w(v)) \y_v^k }{  \w(e)  }.
\end{align}
Using \eqref{construction} and Condition (C1), we can deduce that
$$
\sum_{ e\in E_H(v)}\w(e) \y^e = \sum_{ e\in E_H(v)} B(v,e)(\lambda -\w(v)) \y_v^k=(\lambda -\w(v)) \y_v^k,
$$
Thus, $(\lambda,\y)$ is an eigenpair of $\A(\Gamma)$ with $\s(\y)=V(H)$, as desired.
\end{proof}

Combining Theorem \ref{maintheorem321} and Lemma \ref{mainlemma1}, we immediately obtain the following result, which plays an important role in the proof of Theorem \ref{maintheorem}.

\begin{theorem}\label{corollary322}
Let $(T,\w)$ be a weighted $k$-tree with $k\geq 3$.
If $\lambda\neq \w(v)$ for any $v\in V(T)$,
then $\lambda$ is an eigenvalue of $\A(T,\w)$ if and only if there exists a subtree $T'$ of $T$ such that
$(T',\w)$ is consistently $\lambda$-normal.
\end{theorem}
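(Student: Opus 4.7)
The plan is to prove this as a direct corollary by chaining together the two main results already established in Sections 2 and 3, namely Theorem \ref{maintheorem321} (eigenvalue vs.\ full-support eigenvector on a subtree) and Lemma \ref{mainlemma1} (full-support eigenpair vs.\ consistent $\lambda$-normality). The only care needed is to verify the hypotheses of each result transfer correctly along the bi-implication. The statement is not genuinely harder than a bookkeeping exercise, so I do not expect any substantive obstacle; the content has already been shouldered by Theorem \ref{maintheorem321} and Lemma \ref{mainlemma1}.

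For the forward direction, I would start with the assumption that $\lambda$ is an eigenvalue of $\A(T,\w)$ with $\lambda \neq \w(v)$ for every $v\in V(T)$. Apply Theorem \ref{maintheorem321} to produce a subtree $T'$ of $T$ and an eigenpair $(\lambda, \hat{\x})$ of $\A(T',\w)$ with $\s(\hat{\x}) = V(T')$. Since $V(T') \subseteq V(T)$, the assumption $\lambda\neq \w(v)$ persists on $V(T')$, and $T'$ is connected (as a subtree). Thus the hypotheses of Lemma \ref{mainlemma1} are met for $\Gamma' = (T',\w)$, and the lemma yields that $(T',\w)$ is consistently $\lambda$-normal with respect to some weighted incidence matrix $B$.

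The reverse direction runs the same chain backwards. Suppose there exists a subtree $T'$ of $T$ such that $(T',\w)$ is consistently $\lambda$-normal with respect to a weighted incidence matrix $B$. Since $T'$ is connected and $\lambda \neq \w(v)$ for every $v \in V(T')$, Lemma \ref{mainlemma1} provides a vector $\y\in \C^{|V(T')|}$ with $\s(\y) = V(T')$ such that $(\lambda, \y)$ is an eigenpair of $\A(T',\w)$. Feeding this back into Theorem \ref{maintheorem321}, we conclude that $\lambda$ is an eigenvalue of $\A(T,\w)$, completing the proof.
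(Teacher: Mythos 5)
Your proposal is correct and matches the paper exactly: the paper derives this theorem by the same two-step chain, stating that it follows immediately by combining Theorem \ref{maintheorem321} with Lemma \ref{mainlemma1}. The only detail worth noting is that Lemma \ref{mainlemma1} operates under the section's standing assumption that $\w(e)\neq 0$ for every edge, which is implicitly in force here as well.
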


\section{Proof of Theorem \ref{maintheorem}}

 We prove Theorem \ref{maintheorem} in this section.
To do it, we need to introduce a multivariate polynomial.
For a weighted $k$-tree $\Gamma=(T,\w)$,
let $\tilde{\mu}(\Gamma)$ denote the following multivariate polynomial with respect to indeterminates $\{x_e \}_{e\in E(T)}$:
\begin{equation}\label{defmatp44}
\tilde{\mu}(\Gamma)=\tilde{\mu}\left( \Gamma,\{x_e \}_{e\in E(T)}\right)
:= \sum_{M \in \mathcal{M}(T)} (-1)^{|M|}
\prod_{e \in E(M)} \left(\prod_{v \in e} \frac{\w(e)}{x_e-\w(v)}\right).
\end{equation}

\begin{lemma}\label{lemma4111}
Let $\Gamma=(T,\w)$ be a weighted $k$-tree, and let  $\{\lambda_e\}_{e\in E(T)}$ be an ordered sequence consisting of complex numbers and satisfying $\lambda_e\neq \w(v)$ for any $v\in e$.
Then $\Gamma$ is $\{\lambda_e\}$-normal if and only if $\{\lambda_e\}_{e\in E(T)}$ is a root of $\tilde{\mu}(\Gamma)$.
 \end{lemma}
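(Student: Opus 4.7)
The plan is to prove the lemma by induction on $|E(T)|$, using a pendant-edge recurrence. The base $|E(T)|\le 1$ is immediate (for a single edge $e$ both conditions collapse to $\prod_{v\in e}\w(e)/(\lambda_e-\w(v))=1$). For the inductive step, I pick a pendant edge $e^* = \{v_1,\dots,v_k\}$ of $T$ with $v_2,\dots,v_k$ of degree $1$ and $v_1$ of degree $\ge 2$, set $T_1 := T\setminus e^*$, and let $T_2$ be the induced subgraph of $T_1$ on $V(T_1)\setminus\{v_1\}$ (possibly a forest). Splitting the defining sum of $\tilde{\mu}(\Gamma)$ by whether $e^*\in M$ yields
\begin{equation*}
\tilde{\mu}(\Gamma)\big|_{x_e=\lambda_e} \;=\; \tilde{\mu}(\Gamma_1)\big|_\lambda \;-\; \alpha_{e^*}\,\tilde{\mu}(\Gamma_2)\big|_\lambda,\qquad \alpha_{e^*}:=\prod_{v\in e^*}\frac{\w(e^*)}{\lambda_{e^*}-\w(v)}.
\end{equation*}

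For necessity, assume $\Gamma$ is $\{\lambda_e\}$-normal with incidence matrix $B$, and set $c_e := \alpha_e$. Expanding $1=\prod_{v}\sum_{e\in E_T(v)}B(v,e)$ via (C1) gives $\sum_\phi \prod_v B(v,\phi(v)) = 1$ with $\phi$ ranging over maps $V(T)\to E(T)$ satisfying $\phi(v)\in E_T(v)$; for each such $\phi$, the set $A_\phi := \{e\in E(T):\phi(v)=e \text{ for every }v\in e\}$ is automatically a matching. A one-line inclusion--exclusion using (C2) and the identity $[A_\phi=\emptyset]=\sum_{M\subseteq A_\phi}(-1)^{|M|}$ converts this into
\begin{equation*}
\tilde{\mu}(\Gamma)\big|_\lambda \;=\; \sum_{\phi\,:\,A_\phi=\emptyset}\,\prod_{v\in V(T)}B(v,\phi(v)).
\end{equation*}
A short induction on $|E(T)|$ shows that in a tree every $\phi$ has $A_\phi\neq\emptyset$ (the $k-1$ leaves of any pendant edge $e^*$ vote for $e^*$, so either $v_1$ also does, placing $e^*\in A_\phi$, or we restrict $\phi$ to $T_1$ and invoke the hypothesis). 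Hence the right-hand side is an empty sum, and $\tilde{\mu}(\Gamma)|_\lambda=0$.

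For sufficiency, assume $\tilde{\mu}(\Gamma)|_\lambda=0$ and $\alpha_{e^*}\neq 1$. Introduce modified targets $c^1_e := c_e/(1-\alpha_{e^*})$ for $e\in E_{T_1}(v_1)$ and $c^1_e := c_e$ otherwise. A routine expansion splitting matchings of $T_1$ by their behavior at $v_1$ yields $\sum_{M\in\mathcal{M}(T_1)}(-1)^{|M|}\prod_{e\in M}c^1_e = \tilde{\mu}(\Gamma)|_\lambda/(1-\alpha_{e^*})=0$. Applying the inductive hypothesis---in the slightly more general form that allows the (C2) right-hand sides to be arbitrary nonzero complex scalars rather than the specific expression from $\w,\lambda$---produces an incidence matrix $B_1$ on $T_1$ realizing the targets $c^1_e$. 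I then extend $B_1$ to $B$ on $T$ by setting $B(v_i,e^*)=1$ for $i\ge 2$, $B(v_1,e^*)=\alpha_{e^*}$, and $B(v_1,e)=(1-\alpha_{e^*})B_1(v_1,e)$ for $e\in E_{T_1}(v_1)$; a direct check confirms (C1) and (C2) for $B$.

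The principal obstacle I expect is the degenerate case $\alpha_{e^*}=1$, which makes the rescaling singular. I would address it by formulating the induction from the start with arbitrary nonzero $c_e\in\C$ targets on the (C2) side so the inductive hypothesis is genuinely self-contained, and then removing the degeneracy either by choosing a different pendant edge $e^{**}$ with $\alpha_{e^{**}}\neq 1$ whenever one exists, or by a perturbation/continuity argument varying $\lambda_{e^*}$ infinitesimally and passing to a limit on the resulting incidence matrices. A secondary bookkeeping point is that $T_2$ may be a $k$-forest, so both the inductive statement and $\tilde{\mu}$ should be phrased for $k$-forests, with $\tilde{\mu}$ factoring multiplicatively over components.
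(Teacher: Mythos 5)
Your necessity argument is correct and is genuinely different from the paper's. The paper proves both directions by one induction that strips all pendant edges at a chosen vertex $u$ and rescales the remaining incident edge $f$ via its equation \eqref{proof400}; you instead get necessity in one shot by expanding $1=\prod_{v}\sum_{e\in E_T(v)}B(v,e)$ using (C1), applying inclusion--exclusion over the ``unanimous'' edge set $A_\phi$, and observing that in a hypertree every admissible $\phi$ has $A_\phi\neq\emptyset$ (your induction works; alternatively, $|V(T)|=|E(T)|(k-1)+1$ votes cannot all land on edges each receiving at most $k-1$ of their own vertices' votes). This is cleaner than the paper's route and is immune to any degeneracy. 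Your pendant-edge recurrence for $\tilde{\mu}$ and the extension of $B_1$ to $B$ in the sufficiency half are also computed correctly.

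The genuine gap is exactly the degenerate case $\alpha_{e^*}=1$ that you flag, and neither of your proposed repairs can close it, because the ``root $\Rightarrow$ normal'' implication is in fact false there. Take $T$ to be the loose path with edges $e_1,e_2,e_3,e_4$ (with $e_i\cap e_{i+1}=\{u_i\}$), $\w\equiv 0$ on vertices, $\w\equiv 1$ on edges, $\lambda_{e_1}=\lambda_{e_4}=1$ and $\lambda_{e_2}=\lambda_{e_3}=2$, so $\alpha_{e_1}=\alpha_{e_4}=1$ and $\alpha_{e_2}=\alpha_{e_3}=2^{-k}$. The matchings are $\emptyset$, the four singletons, and $\{e_1,e_3\},\{e_1,e_4\},\{e_2,e_4\}$, so
$$\tilde{\mu}(\Gamma)\big|_{\lambda}=1-\bigl(1+2^{-k}+2^{-k}+1\bigr)+\bigl(2^{-k}+1+2^{-k}\bigr)=0,$$
yet $\Gamma$ is not $\{\lambda_e\}$-normal: the $k-1$ leaves of $e_1$ force $B(v,e_1)=1$, (C2) at $e_1$ then forces $B(u_1,e_1)=\alpha_{e_1}=1$, and (C1) at $u_1$ forces $B(u_1,e_2)=0$, contradicting both the definition of a weighted incidence matrix and (C2) at $e_2$, whose target $2^{-k}$ is nonzero. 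Here \emph{both} pendant edges are degenerate, so choosing a different $e^{**}$ is unavailable, and no perturbation or limiting argument can produce a $B$ that does not exist. You are in good company: the paper's own proof has the identical hole, since \eqref{proof400} has no admissible solution $\widehat{\lambda}_f$ when $1-\sum_{g\in P(u)}\prod_{v\in g}\w(g)/(\lambda_g-\w(v))=0$, and the claim that sufficiency follows ``by conversing the above steps'' silently assumes this quantity is nonzero. The downstream Theorem \ref{maintheorem} survives because it only asserts the existence of \emph{some} subtree: the degeneracy $\sum_{g\in P(u)}\alpha_g=1$ says precisely that the sub-star of pendant edges at $u$ has vanishing $\tilde{\mu}$, so one can recurse on that strictly smaller subtree. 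But as a biconditional about the whole tree, the ``if'' direction of the lemma requires an additional nondegeneracy hypothesis, and your argument, like the paper's, only proves it under that hypothesis.
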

\begin{proof}
We prove the assertion by induction on $|E(T)|$, say $m$.
If $m=1$, then $T$ contains only one edge, say $e$.
If $\Gamma$ is $\{\lambda_e\}$-normal with respect to $B$, by Definition \ref{normaldefinition} and Remark \ref{fact}, then we derive that $B(v,e)=1$ for any $v\in V(T)$ and
$$\prod\limits_{v \in e} B(v,e)=\prod\limits_{v \in e}\frac{\w(e) }{ \lambda_e-\w(v) },$$
which implies
 $$\prod\limits_{v \in e}\frac{\w(e) }{ \lambda_e-\w(v) }=1.$$
Thus, $\{\lambda_e\}_{e\in E(T)}$ is a root of $\tilde{\mu}(\Gamma)$.
Conversely, assume that $\{\lambda_e\}_{e\in E(T)}$ is a root of $\tilde{\mu}(\Gamma)$, equivalently,
 $$\prod\limits_{v \in e}\frac{\w(e) }{ \lambda_e-\w(v) }=1.$$
Let $B$ be the weighted incidence matrix of $\Gamma$ such that $B(v,e)=1$ for any $v\in e$.
It is obvious that $\Gamma$ is $\lambda_e$-normal with respect to $B$, as desired.

Now, suppose that $m \geq 2$ and the assertion holds for natural numbers less than $m$.
Consider a weighted $k$-tree $\Gamma$ of size $m$.
Suppose that $\Gamma$ is $\{\lambda_e\}$-normal.
Since $T$ is a $k$-tree, there exists a vertex $u$ satisying $d_{T}(u) \geq 2$ and  incident with at most one non-pendent edge in $T$.
Denote by $P(u)$  the set consisting of  $d_T(u)-1$ chosen pendent edges incident to $u$ and by $f$ the remaining edge.
Deleting all edges in $P(u)$ and the resultant isolated vertices, we obtain a
 subtree of $T$, denoted by $\widehat{T}$.

Take an edge $g\in P(u)$.
By Remark \ref{fact}, $B(v,g)=1$ for any $v\in g \setminus\{u\}$.
This implies that
$$B(u,g)=\prod\limits_{v \in g} B(v,g)=\prod\limits_{v \in g}\frac{\w(g) }{ \lambda_g-\w(v) }$$
by Condition (C2).
Using Condition (C1), we have
$$B(u,f)= 1-\sum\limits_{g \in P(u)} B(u,g)=1-\sum\limits_{g \in P(u)} \prod\limits_{ v \in g}\frac{\w(g) }{ \lambda_g-\w(v) }.$$
Define a weighted incidence matrix $\widehat{B}$ of $\widehat{T}$ by
\[\widehat{B}(v,e)=\left\{
 \begin{array}{ll}
  B (v,e), &  \mbox{if~}  v\in V(\widehat{T})\setminus\{u\} \mbox{\ and~}  e\in E(\widehat{T}) ;\\
1, &  \mbox{if~} v=u \mbox{\ and~}  e=f.
  \end{array}\right.
\]
Let $\widehat{\lambda}_f$ be a solution of the equation (with respect to  indeterminates $x$)
\begin{equation}\label{proof400}
\frac{\prod\limits_{ v \in f}\frac{\w(f) }{ \lambda_f-\w(v) }  }
{1-\sum\limits_{g \in P(u)} \prod\limits_{ v \in g}\frac{\w(g) }{ \lambda_g-\w(v) }  }
= \prod\limits_{ w \in f} \frac{\w(f) }{ x-\w(v) },
\end{equation}
and set $\widehat{\lambda}_e=\lambda _e$ for any $e\in  E(\widehat{T})\setminus\{f\}$.
Now, one may check that $\widehat{\Gamma}=(\widehat{T},\w[V(\widehat{T})])$ is $\{\widehat{\lambda}_e\}_{e\in E(\widehat{T})}$-normal  with respect to $\widehat{B}$.
By the induction hypothesis, we obtain that $\{\widehat{\lambda}_e\}_{e\in E(\widehat{T})}$ is a root of the polynomial
\begin{equation}\label{proof411}
\sum_{M \in \mathcal{M}(\widehat{T})} (-1)^{|M|}
\prod_{e \in E(M)} \left(\prod_{v \in e} \frac{\w(e)}{x_e-\w(v)}\right).
\end{equation}
Denote by $\mathcal{R}(\widehat{T})$ the set of all matchings of $\hat{T}$ that contains the edge $f$.
Write $\mathcal{S}(\widehat{T}) = \mathcal{M}(\widehat{T}) \setminus \mathcal{R}(\widehat{T})$ and $\chi_M=\prod_{e \in E(M)} \left(\prod_{v \in e} \frac{\w(e)}{\lambda_e-\w(v)}  \right)$ for any matching $M$ of $T$.
From \eqref{proof400} and \eqref{proof411}, one may check that
$$
 \sum_{M \in \mathcal{R}(\widehat{T})}(-1)^{|M|}
\left(
\frac{ 1  } {1-\sum\limits_{ g \in P(u)} \prod\limits_{ v \in g}\frac{\w(g) }{ \lambda_g-\w(v) }  }
\chi_M
\right)
  +   \sum_{M \in \mathcal{S} (\widehat{T})} (-1)^{|M|}\chi_M=0.
$$
Equivalently,
$$
 \sum_{M \in \mathcal{R}(\hat{T})}(-1)^{|M|}\chi_M+ \sum_{M \in \mathcal{S} (\hat{T})} (-1)^{|M|}\chi_M
 -\left(\sum\limits_{ g \in P(u)} \chi_g \right)
 \left(\sum_{M \in \mathcal{S} (\hat{T})} (-1)^{|M|}\chi_M\right)
 =0,
$$
which implies that
\begin{equation}\label{proof422}
 \sum_{M \in \mathcal{R} } (-1)^{|M|}\chi_M
+ \sum_{M \in \mathcal{S} } (-1)^{|M|}\chi_M
+ \sum_{M \in \mathcal{M}(T) \setminus  (\mathcal{R}\cup \mathcal{S})} (-1)^{|M|}\chi_M =0,
\end{equation}
where $\mathcal{R}$ denotes the set of all matchings of $T$ containing the edge $f$ and $\mathcal{S}$ denotes the set of all matchings of $T$ containing none of $E_H(u)$.
By \eqref{proof422}, we deduce that $\{\lambda_e\}_{e\in E(T)}$ is a root of $\tilde{\mu}(\Gamma)$.

The sufficiency can be proved by conversing the above steps, and so the result follows.
\end{proof}

We are now ready to prove Theorem \ref{maintheorem}.

\begin{proof}[\proofname{ of \bf Theorem \ref{maintheorem}.}]
Write $\A= \A(\Gamma)$.
For a vertex $v\in V(H)$, let $\v\in \C^{n}$ be the vector whose $v$-th entry is $1$ and other entries are $0$.
Using \eqref{eigenvectorequation21} and $k\geq 3$, we find that
$$(\A\v^{k-1})_v=\w(v)\v_v^{k-1}+\sum_{e\in E_H(v)}\w(e) \v^{e\backslash \{v\}}=\w(v)\v_v^{k-1}$$
and for any $u\neq v$,
$$(\A\v^{k-1})_u=\w(u)\v_u^{k-1}+\sum_{e\in E_H(u)}\w(e) \v^{e\backslash \{u\}}=0=\w(v)\v_u^{k-1}.$$
Thus, $(\w(v),\v)$ is an eigenpair of $\A$.
The first statement follows the arbitrariness of $v$.

If $\lambda\neq \w(v)$ for any $v\in V(T)$,  by Theorem \ref{corollary322}, then $\lambda$ is an eigenvalue of $\A(\Gamma)$ if and only if there is a subtree $T'$ of $T$ such that $\Gamma[V(T')]$ is consistently $\lambda$-normal.
By Lemma \ref{lemma4111}, it is equivalent to that $\lambda$ is a root of the polynomial
$$\sum_{M \in \mathcal{M}(T')} (-1)^{|M|}
\prod_{e \in E(M)} \left(\prod_{v \in e} \frac{\w(e)}{x -\w(v)}\right).
$$
Since $\lambda\neq \w(v)$ for any $v\in V(T)$,
$\lambda$ is a root of the polynomial
\begin{align*}
&\left(\prod_{v \in V(T')} (x -\w(v))  \right)  \sum_{M \in \mathcal{M}(T')} (-1)^{|M|}
\prod_{e \in E(M)} \left(\prod_{v \in e} \frac{\w(e)}{x -\w(v)}\right),
\end{align*}
which is equal to
$$\sum_{M \in \mathcal{M}(T')}(-1)^{|M|}\prod_{e \in E(M)}\mathbf{w}(e)^k \prod_{v \in V(T')\backslash V(M)}(x-\w(v))
 =\mu(T',\w,x).$$
This shows the second statement of Theorem \ref{maintheorem} under the hypothesis in which $w(e) \neq 0$ for each $e\in E(T)$. If not, deleting all edges whose weights are  equal to $0$, we obtain a subforest of $T$, denoted by $F$. It is easy to see that $\lambda$ is an eigenvalue of some component of $F$. Applying the above discussion to this component, the second statement follows.

Finally, if $\w$ is nonnegative, then $\A(T,\w)$ is weak irreducible
and so that the spectral radius of $\A(T,\w)$ is an eigenvalue of $\A(T,\w)$ with a positive eigenvector $\x$ by Lemma \ref{perronthm}.
Since $\x$ is positive, by Lemma \ref{mainlemma1} and Lemma \ref{lemma4111}, the spectral radius of $\A(T,\w)$ is a root of $\mu(T,\w,x)$.
Note that the spectral radius of $\A(T,\w)$ is the maximum module over all eigenvalues of $\A(T,\w)$,
so it must equal the largest root $\mu(T,\w,x)$ by the second statement.
The proof is completed.
\end{proof}

\section{Concluding Remarks}
In this paper, we mainly give a complete characterization of the eigenvalues of the adjacency tensors of weighted $k$-trees by means of the modified $\alpha$-normal method and the weighted matching polynomial.
As we mentioned before, for a $k$-graph $H$ of order $n$ with $k\geq 3$, the degree of the characteristic polynomial of $H$ is $n(k-1)^{n-1}$ \cite{Cooper}, but the degree of the matching polynomial of $H$ is always $n$.
In fact, we were unable to compute characteristic polynomials for some $3$-graphs on only $9$ vertices  \cite{Cooper}.
However, the degree of the matching polynomial of $T$ is always $n$,
so Theorem \ref{ClarkCoopertheorem11} and Theorem \ref{maintheorem} suggest that
when we calculate the eigenvalues of the adjacency tensor of a weighted uniform hypertree, it suffices to find the zeros of the weighted matching polynomials of its weighted subtrees.
It seems that the later problem is easier for some special weighted uniform hypertrees.

As we know, the characteristic polynomial of a $2$-tree coincides with its matching polynomial \cite{Lovasz}.
Thus, for a $2$-tree $T$, $\lambda$ is a root of $\mu(T,x)$ with multiplicity $m$ if and only if  $\lambda$ is a root of $\phi_{\A(T)}(x)$ with multiplicity $m$.
However, for a $k$-tree $T$ with $k\geq 3$, Theorem \ref{ClarkCoopertheorem11} and Theorem \ref{maintheorem} tell us that
a nonzero number $\lambda$ is an eigenvalue of $\A(T)$ if and only if there exists an induced subtree $T'$ of $T$ such that $\lambda$ is a root $\varphi(T',x)$.
This raises a natural question: Is it possible to determine the multiplicity of an eigenvalue of $\A(T)$ as a root of the characteristic polynomial $\phi_{\A(T)}(x)$? For further discussion, see  \cite{Hu,Fan,CooperFickes}.
Clark and Cooper \cite{ClarkCooper} conjectured the following:
If $T'$ is a subtree of a $k$-tree $T$ with $k\geq 3$, then $\varphi(T',x)$ divides $\phi_{\A(T)}(x)$, where
$$\varphi(T',x):=\sum_{r\geq 0}(-1)^rp(T',r) x^{(m(T')-r)k}.$$
We wonder that the conjecture is also true for a weighed $k$-tree $(T,\w)$, that is,
if $T'$ is a subtree of $T$, then $\mu(T',\w,x)$ divides the characteristic polynomial of $\A(T,\w)$.



\end{document}